\theoremstyle{plain}
\newtheorem{theorem}{Theorem}[section]
\newtheorem*{Theorem B}{Theorem B}
\newtheorem*{Theorem A}{Theorem A}
\newtheorem{lemma}{Lemma}[section]
\newtheorem{proposition}{Proposition}[section]
\newtheorem{corollary}{Corollary}[section]
\newtheorem{example}{Example}[section]
\numberwithin{equation}{section}
\theoremstyle{remark}
\newtheorem{remark}{Remark}[section]
 \numberwithin{equation}{section}
\def\<{\left < }
\def\>{\right >}
\def\({\left ( }
\def\){\right )}
\def\o{\omega }
\def\e{\eqref}
\def\p{\partial }
\def\x{{\bf x}}
\def\k{\kappa}
\begin{document}

\markboth{B.-Y. Chen}{Ricci solitons and concurrent vector fields}

\title[Ricci solitons and concurrent vector fields]{Ricci solitons and concurrent vector fields}

\author[ B.-Y. Chen and S. Deshmukh]{Bang-Yen Chen and Sharief Deshmukh }

 \address{Department of Mathematics\\Michigan State University \\619 Red Cedar Road \\East Lansing, MI 48824--1027, USA}

\email{bychen@math.msu.edu}

 \address{Department of Mathematics\\  King Saud University\\  Riyadh 11451, Saudi Arabia}
 \email{shariefd@ksu.edu.sa}

\begin{abstract} A Ricci soliton $(M^n,g,v,\lambda)$ on a Riemannian manifold $(M^n,g)$ is said to have concurrent potential field if its potential field $v$ is a concurrent vector field.
 In the first part of this paper we completely classify Ricci solitons with concurrent potential fields. In the second part  we derive a necessary and sufficient condition for a submanifold to be a Ricci soliton in a Riemannian manifold equipped with a concurrent vector field. In the last part, we classify shrinking Ricci solitons with $\lambda=1$  on Euclidean hypersurfaces.
 Several applications of our results are also presented.
\end{abstract}

\keywords{Ricci soliton, Einstein manifold, submanifolds, concurrent vector field, concurrent potential field, shrinking Ricci soliton}

 \subjclass[2000]{53C25, 53C40}

\maketitle

\section{Introduction}

A smooth vector field $\xi $ on a Riemannian manifold $(M,g)$ is said to define a {\it Ricci soliton} if it satisfies
\begin{equation}\label{1.1}
\frac{1}{2}{\mathcal L}_{\xi }g+Ric=\lambda g,
\end{equation}
where ${\mathcal L}_{\xi }g$ is the Lie-derivative of the metric tensor $g$ with respect to $\xi $, $Ric$ is the Ricci tensor of $(M,g)$ and $\lambda $ is a constant. We shall denote a Ricci soliton by $(M,g,\xi ,\lambda )$. We call the vector field $\xi $ the {\it potential field} of the Ricci soliton. The Ricci
soliton $(M,g,\xi ,\lambda )$ is called  {\it shrinking, steady} or {\it expanding} according to  $\lambda >0,\, \lambda =0,$ or $\lambda<0$, respectively. A Ricci soliton $(M,g,\xi ,\lambda )$ is said to be {\it trivial} if $(M,g)$ is an Einstein manifold.

A Ricci soliton $(M,g,\xi ,\lambda )$ is called a {\it gradient Ricci soliton} if its potential field $\xi $ is the gradient of some smooth function $f$ on $M$. We shall denote a gradient Ricci soliton by $(M,g,f,\lambda )$ and call
the smooth function $f$ the {\it potential function}. A gradient Ricci soliton $(M,g,f,\lambda )$ is called {\it trivial} if its potential function $f$ is a constant. It follows from \e{1.1} that trivial gradient Ricci solitons are trivial Ricci solitons automatically since $\xi=\nabla f$. 
It is well-known that if $(M,g,\xi ,\lambda )$ is a compact Ricci soliton, then the potential field $\xi $ is a gradient of some smooth function $f$ up to the addition of a Killing field and thus a compact Ricci soliton is a gradient Ricci soliton (cf. \cite{P}). 

During the last two decades, the geometry of Ricci solitons has been the focus of attention of many
mathematicians. In particular, it has become more important after Grigory Perelman  applied Ricci solitons to solve the long standing Poincar\'e conjecture posed in 1904. G. Perelman observed in \cite{P} that  the Ricci solitons on compact simply connected Riemannian manifolds   are gradient Ricci solitons as solutions of Ricci flow. 

If the holonomy group of a Riemannian $m$-manifold $M$ leaves a point invariant, then it was proved in \cite{Yano} that there exists a vector field $v$ on $M$ which  satisfies 
\begin{align}\label{1.2} \nabla_Z v=Z\end{align} 
for any  vector $Z$ tangent to $M$, where $ \nabla$ denotes the Levi-Civita connection of $M$.
Such a vector field is called a {\it concurrent vector field}. 
Riemannian manifolds equipped with concurrent vector fields have been studied by many mathematician (see, e.g. \cite{CY,MM,MR,PRV1,PRV2,Yano,YC}). Concurrent vector fields have also been studied in Finsler geometry since the beginning of 1950s (see, e.g. \cite{ME,T}). 

There are two aspects of the study of Ricci solitons, one looking at the influence on the topology by
the Ricci soliton structure of the Riemannian manifold (see e.g. \cite{Der,LG})
and the other looking at its influence on its geometry (see e.g. \cite{CD,Des,DAA}).
 In this paper we are interested in the geometry of Ricci solitons arisen from concurrent vector fields on Riemannian manifolds. 
 
 In the first part of this paper we completely classify Ricci solitons with concurrent potential fields. In the second part  we derive a necessary and sufficient condition for a submanifold to be a Ricci soliton in a Riemannian manifold equipped with a concurrent vector field. In the last part, we classify shrinking Ricci solitons with $\lambda=1$  on Euclidean hypersurfaces.
 Several applications of our results are also presented.

\section{Preliminaries}

\subsection{Basic formulas and definitions for submanifolds}

For general references on Riemannian submanifolds, we refer to \cite{book73,book,book14}.

 Let  $(N^m,\tilde g)$ denote an $m$-dimensional Riemannian manifold and $\phi:M^n \to N^m$ an isometric immersion from an $n$-dimensional Riemannian manifold $(M^n,g)$ into $(N^m,\tilde g)$.      
Denote by $\nabla$ and $\tilde\nabla$ the Levi-Civita connections on $(M^n,g)$ and $(N^m,\tilde g)$, respectively. 

For vector fields $X,Y$ tangent to $M^n$ and $\eta$ normal to $M^n$, the formula of Gauss and the formula of Weingarten are given respectively by \begin{align} &\label{2.1}\tilde \nabla_XY=\nabla_XY+h(X,Y), \;\;
\\& \label{2.2}\tilde \nabla_X \eta=-A_\eta X+D_X\eta,\end{align} 
where $\nabla_X Y$ and $h(X,Y)$ are the tangential and the normal components of $\tilde\nabla_X Y$. Similarly,  $-A_\eta X$  and  $D_X\eta$ are the tangential and normal components of  $\tilde \nabla_X \eta$. These two formulas define the second
fundamental form $h$, the shape operator $A$, and the normal connection $D$ of $M^n$ in the ambient space $N^m$. 
 
  For a normal vector $\eta\in T_p^{\perp}M$ at $p\in M$,  $A_{\eta}$ is a self-adjoint endomorphism of the tangent space $T_pM$. The shape operator and the second fundamental form are related by
 \begin{align} &\label{2.3}\tilde g(h(X,Y),\eta)=g(A_{\eta}X,Y).\end{align}
  The {\it mean curvature vector}  $H$ of $M^n$ in $N^m$ is defined by \begin{align}\label{2.4} H=\(\frac{1}{n}\){\rm trace}\, h.\end{align}
The equations of Gauss and Codazzi are given respectively by
\begin{align} &\label{2.5} g(R(X,Y)Z,W) = \tilde g( \tilde R(X,Y)Z,W)  + \tilde g(h(X,W),h(Y,Z))\\&\notag \hskip1.6in - \tilde g(h(X,Z),h(Y,W)),\\
& \label{2.6} (\tilde R(X,Y)Z)^\perp =(\bar \nabla_X h)(Y,Z)-(\bar\nabla_Y h)(X,Z),\end{align}
for vectors $X,Y,Z,W$  tangent to $M$ and $\zeta,\eta$  normal to $M$, where $(\tilde R(X,Y)Z)^\perp $ is the normal component of $\tilde R(X,Y)Z$ and $\bar\nabla h$ is defined by
\begin{equation}\begin{aligned}&\label{2.8} (\bar\nabla_X h)(Y,Z) = D_X h(Y,Z) - h(\nabla_X Y,Z) - h(Y,\nabla_X Z).\end{aligned}\end{equation} 

\subsection{Examples of Riemannian manifolds endowed with concurrent fields}
The best known example of Riemannian manifolds endowed with concurrent vector fields is the Euclidean space with the concurrent vector field given by its position vector field ${\bf x}$ (with respect to the origin).

For more general examples of Riemannian manifolds with concurrent vector fields, let us consider warped product manifolds of the form:  $I\times_{s} F$, where $I$  is an open interval of the real line $\bf R$ with $s$ as its arclength and $F$ is a Riemannian manifold. The metric tensor $g$ of $I\times_{s} F$ is given by
$g=ds^2+s^2 g_F$,
where $g_F$ is the metric tensor of the second factor $F$. Let us put
$v= s\frac{\p}{\p s}$.
It follows easily from Proposition 4.1 of \cite[page 79]{book} that the vector field $v$ satisfies  \e{1.2} for any vector $Z$ tangent  to $I\times_{s} F$. Therefore $I\times_{s} F$ admits a concurrent vector field $v=s{\p}/{\p s}$.

\section{Ricci solitons with concurrent potential fields}

A Ricci soliton $(M^n,g,v,\lambda)$ on a Riemannian manifold $(M^n,g)$ is said to have {\it concurrent potential field} if its potential field $v$ is a concurrent vector field.

The following theorem classifies Ricci solitons on Riemannian manifolds endowed with a concurrent potential field.

\begin{theorem} \label{T:3.1} A Ricci soliton $(M^n,g,v,\lambda)$ on a Riemannian $n$-manifold $(M^n,g)$ has concurrent  potential field $v$ if and only if the following two conditions hold:

\begin{enumerate}
\item[{\rm (a)}] The Ricci soliton is a shrinking Ricci soliton with $\lambda=1$.

\item[{\rm (b)}]  $M^n$ is an open part of a warped product manifold $I\times_s F$, where $I$ is an open interval  with arclength $s$  and $F$ is an Einstein $(n-1)$-manifold whose Ricci tensor satisfies $Ric_F=(n-2)g_F$, $g_F$ is the metric tensor of $F$. 
\end{enumerate}
\end{theorem}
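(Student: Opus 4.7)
My plan is to treat the two directions separately, with the forward implication doing the heavy lifting.

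For the ``only if'' direction, suppose $v$ is concurrent so that $\nabla_Z v = Z$ for every tangent $Z$. A direct computation yields $(\mathcal{L}_v g)(X,Y) = g(\nabla_X v, Y) + g(X, \nabla_Y v) = 2g(X,Y)$, so the soliton equation \eqref{1.1} reduces to $Ric = (\lambda-1) g$; in particular $(M,g)$ is Einstein. To identify $\lambda$ I would use the curvature identity forced by $\nabla v = \mathrm{id}$: for all tangent $X,Y$ one has $R(X,Y)v = \nabla_X \nabla_Y v - \nabla_Y \nabla_X v - \nabla_{[X,Y]} v = \nabla_X Y - \nabla_Y X - [X,Y] = 0$. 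Contracting on $X$ gives $Ric(\,\cdot\,,v) \equiv 0$, which together with the Einstein relation (and the fact that $v$ cannot vanish identically, since $\nabla v$ has full rank) forces $\lambda = 1$ and hence $Ric \equiv 0$. This establishes (a).

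Next, to recover the warped-product structure in (b), I would introduce $f = \tfrac{1}{2}|v|^2$. Concurrency gives $\nabla f = v$ and then $\mathrm{Hess}(f)(X,Y) = g(\nabla_X v, Y) = g(X,Y)$, i.e.\ $\mathrm{Hess}(f) = g$. Setting $s := |v| = \sqrt{2f}$, one gets $|\nabla s| \equiv 1$ on the open dense set $\{v \ne 0\}$, so $s$ is a local distance function whose level hypersurfaces foliate $M$ and whose gradient flow is by unit-speed geodesics. Invoking the classical Tashiro-type classification of manifolds carrying a non-constant function with Hessian proportional to the metric (equivalently, writing the metric in Fermi coordinates along $\nabla s$ and then reading off the transverse part from the Hessian identity), I would conclude that locally $M$ is isometric to $I \times_{s} F$ with $v = s\,\partial/\partial s$. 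This is the main obstacle, since it is precisely the point at which the algebraic rigidity of ``concurrent'' must be converted into a geometric product decomposition.

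Finally, substituting $g = ds^2 + s^2 g_F$ into the standard warped-product formulas for the Ricci tensor, the $\partial/\partial s$ and mixed components vanish automatically, while for $V,W$ tangent to $F$ one finds $Ric(V,W) = Ric_F(V,W) - (n-2)g_F(V,W)$. Imposing $Ric \equiv 0$ yields $Ric_F = (n-2) g_F$, which is exactly (b). The ``if'' direction then reduces to reversing these observations: on $I \times_s F$ the Example of Section~2 exhibits $v = s\,\partial/\partial s$ as concurrent, so $\tfrac{1}{2} \mathcal{L}_v g = g$, and the Ricci computation above together with the hypothesis $Ric_F = (n-2) g_F$ gives $Ric \equiv 0$. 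Adding the two relations produces \eqref{1.1} with $\lambda = 1$, completing the plan.
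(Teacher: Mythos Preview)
Your proposal is correct but reaches the warped-product structure by a genuinely different route than the paper. The paper works by hand with an adapted orthonormal frame: writing $v=\mu e_1$, it reads off from $\nabla_X v=X$ the connection-form relations showing that $\mathcal D_1=\mathrm{Span}\{e_1\}$ is totally geodesic while $\mathcal D_2=\mathcal D_1^\perp$ is integrable and spherical (totally umbilical with parallel mean curvature), then invokes Hiepko's theorem to obtain $g=ds^2+f(s)^2 g_F$; the warping function is then fixed by $K(X,v)=-f''/f=0$. You instead encode concurrency as the single Hessian identity $\mathrm{Hess}\bigl(\tfrac12|v|^2\bigr)=g$ and appeal to the Tashiro-type classification of functions with conformal Hessian (or the direct Fermi-coordinate argument you sketch) to land on $g=ds^2+s^2 g_F$ at once. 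Your approach is shorter and more conceptual; the paper's is more self-contained, using only elementary frame calculus and Hiepko's theorem rather than an external structure result. Both share the endgame: $\mathcal L_v g=2g$ gives $Ric=(\lambda-1)g$, the curvature identity $R(\,\cdot\,,\,\cdot\,)v=0$ (the paper phrases this as $K(X,v)=0$) forces $\lambda=1$, and the warped-product Ricci formula then yields $Ric_F=(n-2)g_F$.
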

\begin{proof} Assume that $(M^n,g,v,\lambda)$ is a Ricci soliton on a Riemannian $n$-manifold equipped with a concurrent  potential field $v$. Then we have
\begin{align} \label{3.1}& \nabla_X v=X,\;\; \forall X\in TM^n.\end{align}
It follows from \e{3.1} that the concurrent vector field $v$ vanishes on a measure zero subset of $M^n$  at most. By applying \e{3.1} and the definition of sectional curvature, it is easy to verify that the sectional curvature of $M^n$ satisfies
\begin{align} \label{3.2}& K(X, v)=0. \end{align}
for each unit vector $X$ orthogonal to $v$. Hence the Ricci tensor of $M^n$ satisfies 
\begin{align} \label{3.3}& Ric(v, v)=0. \end{align}

Let us put $v=\mu e_1$, where $e_1$ is a unit vector field tangent to $M^n$. Also let us extend $e_1$ to a local orthonormal frame $\{ e_1,\ldots, e_n\}$ on $M^n$. Denote by $\{\omega^1,\ldots,\omega^n\}$ the dual frame of 1-forms of $\{ e_1,\ldots, e_n\}$.

Define the connection forms $\omega_i^j\, (i,j=1,\ldots,n)$ on $M^n$ by
\begin{align} \label{3.4}& \nabla_X e_i=\sum_{j=1}^n \omega_i^j(X)e_j,\;\; i=1,\ldots,n.\end{align}
From \e{3.1} with $X=e_1$, \e{3.4} and the continuity we find
\begin{align} \label{3.5}& e_1\mu =1,
\\&\label{3.6} \nabla_{e_1}e_1=0.\end{align}

Put $\mathcal D_1={\rm Span}\{e_1\}$ and $ \mathcal D_2={\rm Span}\{e_2,\ldots,e_n\}.$
It follows from \e{3.6} that $\mathcal D_1$ is a totally geodesic distribution so that the leaves of $\mathcal D_1$ are geodesics of $M^n$.
Also, we may derive from \e{3.1} with $X=e_i\, (i=2,\ldots,n)$ that
\begin{align} \label{3.7}& e_2\mu=\cdots=e_n \mu=0,
\\&\label{3.8} \mu\omega^1_i(e_i)=-1,\;\;
\\& \label{3.9} \omega^1_j(e_i)=0,\;\; i\ne j.\end{align}

From Cartan's structure equations, we have
\begin{align} \label{3.10}& d\omega^i=-\sum_{j=1}^n \omega^i_j\wedge \omega^j,\;\; i=1,\ldots,n. \end{align}
Thus, after applying \e{3.9} and \e{3.10}, we obtain $d\omega^1=0$. Hence we have locally $\omega^1=ds$ for some function $s$ on $M^n$. 
It follows from \e{3.9} that
\begin{align} \label{3.11}& g([e_i,e_j],e_1)=\omega^1_j(e_i)-\omega^1_i(e_j)=0,\;\; 2\leq i\ne j\leq n.\end{align}
Therefore $\mathcal D_2$ is an integrable distribution. 
Moreover,  from \e{3.8} we know that the second fundamental form $\hat h$ of each leaf $L$ of $\mathcal D_2$ in $M^n$ satisfies \begin{align} \label{3.12}\hat h(e_i,e_j)=-\frac{\delta_{ij}}{\mu}e_1,\;\;  2\leq i,j\leq n,\end{align} 
which shows that the mean curvature of each leaf $L$ is given by $-\mu^{-1}$. 

Equation \e{3.12} implies that  each leaf of $\mathcal D_2$ is a totally umbilical hypersurface of $M^n$ whose mean curvature vector is $\hat H=-e_1/\mu$.  Furthermore, by applying \e{3.7} we conclude that $\mathcal D_2$ is a spherical distribution, i.e., the mean curvature vector of each totally umbilical leaf is parallel in the normal bundle. Consequently,  a result of S. Hiepko (see, e.g., \cite[page 90]{book}) implies that  $M^n$ is locally a warped product manifold $I \times_{f(s)}  F$ whose warped metric is given by
\begin{align}\label{3.13} g=ds^2+f^2(s)g_{F}\end{align}
such that $e_1=\p/\p s$. 

It follows from \e{3.13} that the sectional curvature of $M^n$ satisfies 
\begin{align}\label{3.14} K(X,v)=-\frac{f''(s)}{f(s)}\end{align}
for each unit vector $X$ orthogonal to $v$. Now, after comparing \e{3.2} with  \e{3.14} we obtain $f''(s)=0$. Therefore we obtain $f(s)=as+b$ for some constants $a$ and $b$.

If $a=0$ holds,  then the warped product manifold $I \times_{f(s)}  F$ is a Riemannian product, which implies that every leaf of $\mathcal D_2$ is totally geodesic in $M^n$. Hence $\mu$ must be zero, which contradicts to  \e{3.12}. Therefore we must have $a\ne 0$. Hence, after applying a suitable translation and dilation in $s$ we get $f(s)=s$. Consequently, $M^n$ is locally a warped product manifold $I\times_s F$. 

On the other hand, it follows from the definition of Lie-derivative and condition \e{3.1} that the Lie-derivative satisfies
\begin{align} \label{3.15}& ({\mathcal L}_v g)(X,Y)=g(\nabla_X v,Y)+g(\nabla_Y v,X)=2g(X,Y)\end{align}
for any $X,Y$ tangent to $M^n$. Combining \e{3.15} with \e{1.1} gives 
\begin{align} \label{3.16}& Ric(X,Y)=(\lambda-1)g(X,Y),\end{align}
which shows that $M^n$ is an Einstein $(n-1)$-manifold. After comparing \e{3.3} and \e{3.16} we conclude that $M^n$ is a Ricci flat space. Hence we get $\lambda=1$. Consequently, the Ricci soliton $(M^n,g,v,\lambda)$ is a shrinking one.

Since $M^n$ is a Ricci flat space, it follows from Corollary 4.1(3) of \cite[page 82]{book} or formula (9.109) of \cite[page 267]{Besse} that the second factor $F$ of the warped product manifold $I \times_f(s)  F$ is an Einstein manifold  satisfying $Ric_F=(n-2)g_F$.

The converse can be verified by direct computation.
\end{proof}

Theorem \ref{T:3.1} implies immediately the following

\begin{corollary} There do not exist steady or expanding Ricci solitons with concurrent potential fields.
\end{corollary}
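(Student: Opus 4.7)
The plan is to invoke Theorem \ref{T:3.1} directly, since the corollary is little more than a bookkeeping consequence of part (a) of that theorem. Part (a) asserts that any Ricci soliton with concurrent potential field is forced to be shrinking with $\lambda = 1$. Because a steady soliton requires $\lambda = 0$ and an expanding soliton requires $\lambda < 0$, the value $\lambda = 1$ rules both possibilities out, and the corollary drops out in one line.

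If one prefers to isolate the piece of the preceding proof that actually forces the conclusion, the argument has two short ingredients. First, the concurrency relation $\nabla_X v = X$ makes $R(X,v)v = \nabla_X \nabla_v v - \nabla_v \nabla_X v - \nabla_{[X,v]} v = 0$ for every $X$ tangent to $M^n$, so $K(X,v)=0$ for all unit $X \perp v$ and hence $Ric(v,v) = 0$ as in \e{3.3}. Second, the same relation yields $(\mathcal{L}_v g)(X,Y) = 2g(X,Y)$, which combined with the soliton equation \e{1.1} gives $Ric = (\lambda - 1)g$ as in \e{3.16}. Evaluating at $(v,v)$ produces $0 = (\lambda-1)g(v,v)$; since $\nabla_X v = X$ prevents $v$ from vanishing on any open set, one concludes $\lambda = 1$.

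There is no real obstacle here: the only subtle point is the observation that a concurrent vector field is nonzero off a set of measure zero (noted right after \e{3.1} in the proof of Theorem \ref{T:3.1}), which is what permits us to divide by $g(v,v)$ in the last step. Once that is in hand, the exclusion of $\lambda \leq 0$ is immediate.
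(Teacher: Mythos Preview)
Your proposal is correct and matches the paper's approach exactly: the paper simply records the corollary as an immediate consequence of Theorem~\ref{T:3.1}(a), and your first paragraph does precisely that. The additional unpacking you give in the second paragraph (deriving $Ric(v,v)=0$ and $Ric=(\lambda-1)g$ to force $\lambda=1$) is just the relevant portion of the proof of Theorem~\ref{T:3.1} itself, so there is no genuine divergence in method.
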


\section{Riemannian submanifolds as Ricci solitons}

From now on, we make the following

\vskip.1in
\noindent{\bf Assumption.} {\it $(N^m,\tilde g)$ is a Riemannian $m$-manifold endowed with a concurrent vector field $v$.} 
\vskip.1in

For an isometric immersion $\phi:M^n\to N^m$ of a Riemannian $n$-manifold $(M^n,g)$ into $(N^m,\tilde g)$, we denote by $v^T$ and $v^\perp$ the tangential and normal components of $v$ on $M^n$, respectively. 
As before, we denote by $h, A$ and $D$ the second fundamental form, the shape operator and the normal connection of the submanifold $M^m$ in $N^m$, respectively. 

\begin{theorem}\label{T:4.1} A submanifold $M^n$ in $N^m$ admits a Ricci soliton $(M^n,g,v^T,\lambda)$ if and only if the Ricci tensor of $(M^n,g)$ satisfies
\begin{align} \label{4.1}Ric(X,Y)=(\lambda-1)g(X,Y)-\tilde g(h(X,Y),v^\perp)\end{align}
for any $X,Y$ tangent to $M^n$.
\end{theorem}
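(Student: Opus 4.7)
The plan is to translate the Ricci soliton equation $\tfrac{1}{2}\mathcal{L}_{v^T}g+Ric=\lambda g$ into the stated equation by computing $\mathcal{L}_{v^T}g$ explicitly using the decomposition $v=v^T+v^\perp$ together with the concurrence condition on the ambient manifold.

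First I would decompose the ambient covariant derivative $\tilde\nabla_X v$ for a vector $X$ tangent to $M^n$. Since $v$ is concurrent on $N^m$, we have $\tilde\nabla_X v=X$. On the other hand, writing $v=v^T+v^\perp$ and applying the Gauss formula \eqref{2.1} to $v^T$ and the Weingarten formula \eqref{2.2} to $v^\perp$, I get
\begin{equation*}
X=\tilde\nabla_X v=\nabla_X v^T+h(X,v^T)-A_{v^\perp}X+D_X v^\perp.
\end{equation*}
Taking the tangential component yields the key identity $\nabla_X v^T=X+A_{v^\perp}X$ for every tangent vector $X$. (The normal component gives $D_X v^\perp=-h(X,v^T)$, which I will not need here but which plays a role later.)

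Next I would compute the Lie derivative. By definition
\begin{equation*}
(\mathcal{L}_{v^T}g)(X,Y)=g(\nabla_X v^T,Y)+g(\nabla_Y v^T,X).
\end{equation*}
Substituting the identity above and using the symmetry of $A_{v^\perp}$ together with the relation \eqref{2.3} between the shape operator and the second fundamental form, this becomes
\begin{equation*}
(\mathcal{L}_{v^T}g)(X,Y)=2g(X,Y)+2\,\tilde g(h(X,Y),v^\perp).
\end{equation*}

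Finally I would plug this expression into the soliton equation \eqref{1.1}. Equating $\tfrac{1}{2}\mathcal{L}_{v^T}g+Ric$ with $\lambda g$ and solving for $Ric(X,Y)$ gives \eqref{4.1} immediately. Since every step is reversible, the same computation yields the converse: if \eqref{4.1} holds, then $\tfrac{1}{2}\mathcal{L}_{v^T}g+Ric=\lambda g$, so $(M^n,g,v^T,\lambda)$ is a Ricci soliton. There is no real obstacle here; the only point to be careful about is correctly separating tangential and normal parts when applying the concurrence condition $\tilde\nabla_X v=X$, and invoking the right pair of structural formulas \eqref{2.1}--\eqref{2.3} on each piece.
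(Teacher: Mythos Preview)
Your proposal is correct and follows essentially the same argument as the paper: decompose $v=v^T+v^\perp$, apply the Gauss and Weingarten formulas to the concurrence condition to obtain $\nabla_X v^T=X+A_{v^\perp}X$, compute $\mathcal{L}_{v^T}g$ from this, and substitute into the soliton equation. Your explicit remark on reversibility for the converse is a nice touch that the paper leaves implicit.
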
 
\begin{proof} Let $\phi:M^n\to N^m$ denote the isometric immersion. We have 
\begin{align} \label{4.2} v=v^T+v^\perp.\end{align}
Since $v$ is a concurrent vector field on the ambient space $N^m$, it follows from \e{1.2}, \e{4.2} and formulas of Gauss and Weingarten that
\begin{equation}\begin{aligned} \label{4.3} X&=\tilde \nabla_X v^T+\tilde \nabla_X v^\perp
\\&= \nabla_X v^T+h(X,v^T)-A_{v^\perp}X+D_Xv^\perp
\end{aligned}\end{equation} for any $X$ tangent to $M^n$.

By comparing the tangential and normal components from \e{4.3} we obtain
\begin{align} \label{4.4}& \nabla_X v^T=A_{v^\perp}X+X,
\\&\label{4.5} h(X,v^T)=-D_X v^\perp.
\end{align}
From the definition of Lie derivative and \e{4.4} we obtain
\begin{equation}\begin{aligned} \label{4.6}({\mathcal L}_{v^T}g)(X,Y)&=g(\nabla_X v^T,Y)+g(\nabla_Y v^T,X) \\&=2g(X,Y)+2 g(A_{v^\perp}X,Y)
\\&=2g(X,Y)+2\tilde g(h(X,Y),v^\perp)
\end{aligned}\end{equation} for $X,Y$ tangent to $M^n$. Consequently,  by applying \e{1.1} and \e{4.5}, we conclude that $(M^n,g,v^T,\lambda)$ is a Ricci soliton if and only if we have 
\begin{equation}\begin{aligned} \label{4.7} Ric(X,Y)+g(X,Y)+ \tilde g(h(X,Y),v^\perp)=\lambda g(X,Y),
\end{aligned}\end{equation}
which is nothing but \e{4.1}.
\end{proof}

Recall that the position vector field $\x$ of a Euclidean $m$-space $\mathbb E^m$ is a concurrent vector field. The simplest examples of Ricci solitons $(M^n,g,v^T,\lambda)$ on submanifolds in a Riemannian manifold with concurrent field are the following ones.
 
\begin{example} \label{E:4.1} {\rm  Let $\gamma(s)$ be a unit speed curve lying in the unit hypersphere $S^{m-n}_o(1)$ of  $\mathbb E^{m-n+1}$ centered at the origin $o$. 
Consider the Riemannian submanifold $(M^n,g)$ of $\mathbb E^m$ defined by
$$\phi(s,x_2,\ldots,x_n)=(\gamma(s)x_2,x_2,x_3,\ldots,x_n).$$
Then $M^n$ is a flat space and $(M^n,g,\x^T,\lambda)$ is a shrinking Ricci soliton satisfying \e{4.1} with $\lambda=1$. Moreover, $\x^T=\x$ and $M^n$ is generated by lines in $\mathbb E^m$ through the origin $o$.
}\end{example}

The following provides more examples of Ricci solitons on submanifolds.

\begin{example} \label{E:4.2} {\rm Let $k$ be a natural number such that $2\leq k\leq n-1$ and $r=\sqrt{k-1}$.
Consider the spherical hypercylinder $\phi:S^k(r)\times \mathbb E^{n-k}\to  \mathbb E^{n+1}$ defined by
$\{({\bf y},x_{k+2},\ldots,x_{n+1})\in \mathbb E^{n+1}:  {\bf y}\in \mathbb E^{k+1}\;and \; \<{\bf y},{\bf y}\>=r^2\}.$
 It is straightforward to verify that the spherical hypercylinder $S^k(\!\sqrt{k-1})\times \mathbb E^{n-k}$  in $\mathbb E^{n+1}$ satisfies \e{4.1} with $\lambda=1$. Hence $(S^k(\!\sqrt{k-1})\times \mathbb E^{n-k},g,\x^T,\lambda)$ is a shrinking Ricci soliton with $\lambda=1$.

}\end{example}

\begin{example} \label{E:4.3} {\rm Let $n_1,\ldots,n_p$ be integers $\geq 2$ and $r_1,\ldots,r_p$ be positive numbers satisfying
$({n_1-1})/{r_1^2}=\cdots=({n_p-1})/{r_p^2}.$ Put $n=n_1+\cdots+n_p$.  

Let
$(M^n,g)$ denote the Riemannian product  $S^{n_1}(r_1)\times\ldots\times S^{n_p}(r_p)$ of $p$ spheres $S^{n_1}(r_1),\ldots,S^{n_p}(r_p)$ of radii $r_1,\ldots,r_p$, respectively, which is isometrically imbedded in $\mathbb E^{n+p}$ in the standard way. 
It is direct to verify that  $(M^n,g,\x^T,\lambda)$ is a shrinking Ricci soliton with $\lambda$ equal to $({n_1-1})/{r_1^2}$.

}\end{example}

\section{Some applications of Theorem \ref{T:4.1}}

A Riemannian submanifold $M^n$ is called {\it $\eta$-umbilical} (with respect to a normal vector field $\eta$) if its shape operated satisfies $A_\eta=\varphi I$, where  $\varphi$ is a function on $M^n$ and $I$ is the identity map.

The following two results are immediate consequences of Theorem \ref{T:4.1}.

\begin{theorem} \label{T:5.1} A Ricci soliton $(M^n,g,v^T,\lambda)$ on a submanifold $M^n$ in $N^m$ is trivial  if and only if $M^n$ is $v^\perp$-umbilical.
\end{theorem}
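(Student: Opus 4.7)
The plan is to derive both implications as direct algebraic consequences of Theorem~\ref{T:4.1}, using only the identity $\tilde g(h(X,Y),v^\perp)=g(A_{v^\perp}X,Y)$ from \e{2.3}. Both directions amount to the same manipulation run in opposite orders, so the argument should be short.

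For the ``if'' direction, suppose $M^n$ is $v^\perp$-umbilical, i.e.\ $A_{v^\perp}=\varphi I$ for some function $\varphi$ on $M^n$. Then \e{2.3} gives $\tilde g(h(X,Y),v^\perp)=\varphi\, g(X,Y)$ for all $X,Y$ tangent to $M^n$. Substituting this into the Ricci tensor formula \e{4.1} immediately yields
\begin{equation*}
Ric(X,Y)=(\lambda-1-\varphi)\,g(X,Y),
\end{equation*}
so $(M^n,g)$ is Einstein and therefore the Ricci soliton $(M^n,g,v^T,\lambda)$ is trivial by definition. (For $n\ge 3$, Schur's lemma forces $\varphi$ to be constant; for $n\le 2$ the Einstein condition is automatic, so no separate treatment is required.)

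For the ``only if'' direction, assume $(M^n,g,v^T,\lambda)$ is trivial, so $Ric=c\,g$ for a constant $c$. Plugging this into \e{4.1} gives
\begin{equation*}
\tilde g(h(X,Y),v^\perp)=(\lambda-1-c)\,g(X,Y),
\end{equation*}
and applying \e{2.3} converts this into $g(A_{v^\perp}X,Y)=(\lambda-1-c)\,g(X,Y)$ for all tangent $X,Y$. Hence $A_{v^\perp}=(\lambda-1-c)I$ and $M^n$ is $v^\perp$-umbilical, as desired.

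There is no genuine obstacle here: once Theorem~\ref{T:4.1} is available, the theorem reduces to the observation that the Ricci tensor prescribed by \e{4.1} is a pure multiple of $g$ precisely when the tensor $\tilde g(h(\cdot,\cdot),v^\perp)$ is, which is in turn equivalent to $A_{v^\perp}$ being a scalar multiple of the identity. The only minor point worth flagging in the write-up is the Schur-type remark above to justify calling the resulting manifold Einstein in the strict (constant scalar) sense.
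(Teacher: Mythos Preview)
Your proposal is correct and is exactly the argument the paper has in mind: the paper gives no separate proof of this theorem, merely stating that it is an ``immediate consequence'' of Theorem~\ref{T:4.1}, and your write-up is precisely that consequence spelled out via \e{2.3}. Your Schur-type remark even goes slightly beyond what the paper bothers to say.
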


\begin{corollary}\label{C:5.1} Every Ricci soliton $(M^n,g,v^T,\lambda)$ on a totally umbilical submanifold $M^n$ of $N^m$ is a trivial Ricci soliton.
\end{corollary}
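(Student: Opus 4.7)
The plan is to deduce the corollary directly from Theorem \ref{T:5.1} by observing that total umbilicity is a stronger condition than $v^\perp$-umbilicity. Recall that $M^n$ is totally umbilical in $N^m$ precisely when the second fundamental form satisfies $h(X,Y)=g(X,Y)H$ for all tangent vectors $X,Y$, where $H$ is the mean curvature vector.

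First I would unpack what this says about each shape operator. Using the relation $\tilde g(h(X,Y),\eta)=g(A_\eta X,Y)$ from \e{2.3}, total umbilicity gives
\begin{align*}
g(A_\eta X,Y)=\tilde g(g(X,Y)H,\eta)=\tilde g(H,\eta)\,g(X,Y)
\end{align*}
for every normal $\eta$, hence $A_\eta=\tilde g(H,\eta)I$. In particular, taking $\eta=v^\perp$ yields $A_{v^\perp}=\varphi I$ with $\varphi=\tilde g(H,v^\perp)$, so $M^n$ is automatically $v^\perp$-umbilical.

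Then I would simply invoke Theorem \ref{T:5.1}, which asserts that a Ricci soliton $(M^n,g,v^T,\lambda)$ is trivial if and only if $M^n$ is $v^\perp$-umbilical. Combining this with the previous step delivers the corollary. There is essentially no obstacle here: the corollary is a one-line consequence once one recognizes that ``totally umbilical'' means $\eta$-umbilical for \emph{every} normal direction $\eta$, so the only subtlety is making sure the definitions line up, which the calculation above confirms.
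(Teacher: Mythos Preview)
Your argument is correct and matches the paper's approach: the paper simply records Corollary~\ref{C:5.1} as an immediate consequence (via Theorem~\ref{T:5.1}) without writing out a proof, and your proposal just makes explicit the obvious implication that totally umbilical $\Rightarrow$ $v^\perp$-umbilical.
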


Following \cite{book}, the scalar curvature $\tau$ of a Riemannian $n$-manifold $(M^n,g)$ is defined to be\begin{align}\label{15.1} \tau=\sum_{1\leq i<j\leq n} K(e_i,e_j),\end{align}
where $\{e_1,\ldots,e_n\}$ is an orthonormal frame of $M^n$.

Another easy application  of Theorem \ref{T:4.1} is the following.

\begin{proposition} \label{P:5.1} If $(M^n,g,v^T,\lambda)$ is a Ricci soliton on a minimal submanifold $M^n$ in $N^m$, then  $M^n$ has constant scalar curvature given by $n(\lambda-1)/2$. 
\end{proposition}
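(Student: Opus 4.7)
The plan is to take the trace of the pointwise identity \eqref{4.1} supplied by Theorem \ref{T:4.1} and read off the claim. Because the hypothesis is that $M^n$ is a minimal submanifold, the normal-component term will collapse, leaving only the scalar multiple of the metric, which is exactly what produces a constant scalar curvature.

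First I would choose a local orthonormal frame $e_1,\ldots,e_n$ on $M^n$ and apply Theorem \ref{T:4.1} with $X=Y=e_i$, then sum over $i$. The left-hand side becomes $\sum_i Ric(e_i,e_i)$, which equals $2\tau$ by the definition \eqref{15.1} of the scalar curvature together with the identity $Ric(e_i,e_i)=\sum_{j\ne i}K(e_i,e_j)$. The first term on the right contributes $(\lambda-1)n$. The second term becomes $-\sum_i \tilde g(h(e_i,e_i),v^\perp)=-n\,\tilde g(H,v^\perp)$ by the definition \eqref{2.4} of the mean curvature vector $H$.

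Next I would invoke minimality: $H\equiv 0$ on $M^n$, so the second term vanishes identically, regardless of the value of $v^\perp$. Combining these computations gives $2\tau=n(\lambda-1)$, i.e., $\tau=n(\lambda-1)/2$. Since $\lambda$ is a constant by the definition of a Ricci soliton, $\tau$ is constant on $M^n$, which proves the proposition.

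There is essentially no obstacle here; the only thing to be careful about is the factor of $2$ arising from the convention \eqref{15.1} that sums sectional curvatures over unordered pairs rather than tracing the Ricci tensor directly. Once this bookkeeping is handled, the argument is a one-line trace computation supported entirely by Theorem \ref{T:4.1} and the definition of the mean curvature vector.
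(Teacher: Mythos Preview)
Your proposal is correct and follows essentially the same route as the paper: trace the identity of Theorem~\ref{T:4.1} over an orthonormal frame, use minimality to kill the $\tilde g(H,v^\perp)$ term, and recover $\tau=n(\lambda-1)/2$ via the convention $2\tau=\sum_i Ric(e_i,e_i)$. Your explicit remark about the factor of $2$ from \eqref{15.1} is in fact slightly more careful than the paper's own write-up.
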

\begin{proof} Assume that $(M^n,g,v^T,\lambda)$ is a Ricci soliton on a submanifold $M^n$ in $N^m$. Then 
Theorem \ref{T:4.1} implies that the Ricci tensor of $M^n$ satisfies
\begin{align} \label{15.2}Ric(X,Y)=(\lambda-1)g(X,Y)-\tilde g(h(X,Y),v^\perp)\end{align}
for $X,Y$ tangent to $M^n$. 
If $M^n$ is minimal in $N^m$, then the mean curvature vector vanishes identically. In particular, this implies that $\tilde g(H,v^\perp)=0$. Hence, we obtain \e{15.2}  that
$$\sum_{i=1}^n Ric(e_i,e_i)=n(\lambda-1).$$ Therefore, by \e{5.1}, $M^n$ has constant scalar curvature  $n(\lambda-1)/2$.
\end{proof}

Let $\nabla f$ denote the gradient of a function $f$ on $M^n$. By applying \e{4.4} and \e{4.5} we have the following.

\begin{lemma} \label{L:5.1} Let $M^n$ be a submanifold of $N^m$. Then we have
\begin{align}\label{5.1} &\nabla \psi=-A_{v^\perp}v^T,\\& \label{5.2} v^T=\nabla \varphi,
\end{align}
where $\psi=\frac{1}{2}\tilde g(v^\perp,v^\perp)$
and  $\varphi=\frac{1}{2}\tilde g(v,v)$.

\end{lemma}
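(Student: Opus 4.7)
The plan is to compute $X\psi$ and $X\varphi$ directly from their definitions for an arbitrary tangent vector $X$ on $M^n$, and then read off the gradients.

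First I would tackle the simpler identity $v^T=\nabla\varphi$. Starting from $\varphi=\tfrac12\tilde g(v,v)$ and differentiating along a tangent vector $X$, I get
\begin{equation*}
X\varphi=\tilde g(\tilde\nabla_X v,v).
\end{equation*}
Since $v$ is concurrent in $N^m$, formula \e{1.2} gives $\tilde\nabla_X v=X$, so $X\varphi=\tilde g(X,v)=g(X,v^T)$, using that $X$ is tangent to $M^n$ and the orthogonal decomposition $v=v^T+v^\perp$. Since this holds for every tangent $X$, we conclude $\nabla\varphi=v^T$.

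For the first identity, I would differentiate $\psi=\tfrac12\tilde g(v^\perp,v^\perp)$ along a tangent vector $X$, obtaining $X\psi=\tilde g(\tilde\nabla_X v^\perp,v^\perp)$. Applying the Weingarten formula \e{2.2} gives
\begin{equation*}
\tilde\nabla_X v^\perp=-A_{v^\perp}X+D_X v^\perp,
\end{equation*}
and the tangential part drops out of the inner product with the normal vector $v^\perp$, leaving $X\psi=\tilde g(D_X v^\perp,v^\perp)$. Now I would invoke equation \e{4.5}, namely $D_X v^\perp=-h(X,v^T)$, which was already derived in the proof of Theorem \ref{T:4.1}, to rewrite
\begin{equation*}
X\psi=-\tilde g(h(X,v^T),v^\perp).
\end{equation*}
Using the defining relation \e{2.3} between $h$ and the shape operator, this equals $-g(A_{v^\perp}X,v^T)=-g(X,A_{v^\perp}v^T)$, where the last step uses self-adjointness of $A_{v^\perp}$. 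Since $X$ is arbitrary, this yields $\nabla\psi=-A_{v^\perp}v^T$.

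There is no real obstacle here; the lemma is essentially a bookkeeping exercise. The only point to watch is the sign coming from \e{4.5} and the correct use of self-adjointness of the shape operator. Both formulas follow immediately from the structural identities \e{4.4} and \e{4.5} established in the proof of Theorem \ref{T:4.1}, combined with the Gauss--Weingarten decomposition of $\tilde\nabla_X v$.
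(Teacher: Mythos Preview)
Your proposal is correct and follows essentially the same route as the paper's own proof: compute $X\psi$ via $\tilde g(D_X v^\perp,v^\perp)$, apply \e{4.5}, and convert $h$ to $A$ via \e{2.3}; likewise compute $X\varphi$ using concurrence of $v$. The paper's argument is terser (it jumps directly from $\tilde g(D_X v^\perp,v^\perp)$ to $-g(A_{v^\perp}v^T,X)$), but you have simply spelled out the intermediate steps it suppresses.
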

\begin{proof} Let $M^n$ be a submanifold of $N^m$. Then we find from \e{4.5} that
\begin{equation}\begin{aligned} \notag X\psi&=\tilde g(\tilde \nabla_X v^\perp,v^\perp)=\tilde g(D_X v^\perp,v^\perp)=-g(A_{v^\perp}v^T,X),
\end{aligned}\end{equation} which implies \e{5.1}.
Equation \e{5.2} follows from 
\begin{equation}\begin{aligned} \notag X\varphi &=\tilde g(\tilde \nabla_X v,v)=\tilde g(X,v)=g(X,v^T)\end{aligned}\end{equation} for $X$ tangent to $M^n$.
\end{proof}

The next result follows immediately from \e{5.2} of Lemma \ref{L:5.1}.

\begin{proposition}\label{P:5.3} Every Ricci soliton $(M^n,g,v^T,\lambda)$ on a submanifold $M^n$ of $N^m$ is a gradient Ricci soliton with potential function $\varphi=\frac{1}{2}\tilde g(v,v)$.
\end{proposition}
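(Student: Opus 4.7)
The proposition is essentially a direct corollary of Lemma \ref{L:5.1}, so the plan is short. By definition, a Ricci soliton $(M^n,g,\xi,\lambda)$ is a gradient Ricci soliton precisely when its potential field $\xi$ can be written as $\nabla f$ for some smooth function $f$ on $M^n$. Here the potential field is $v^T$, the tangential component of the ambient concurrent vector field $v$, so the entire task reduces to exhibiting a function whose gradient is $v^T$.

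Equation \e{5.2} of Lemma \ref{L:5.1} supplies exactly such a function, namely $\varphi=\tfrac12\tilde g(v,v)$, for which $\nabla\varphi=v^T$. Thus the plan is simply to invoke Lemma \ref{L:5.1} and conclude that $(M^n,g,v^T,\lambda)$ is a gradient Ricci soliton with potential function $\varphi$. For completeness, I would briefly recall why the formula $X\varphi=g(X,v^T)$ holds: differentiate $\varphi=\tfrac12\tilde g(v,v)$ in the direction of a tangent vector $X$, use $\tilde\nabla_Xv=X$ from \e{1.2}, and split $v=v^T+v^\perp$ to drop the normal part when pairing with $X$.

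There is no real obstacle here; all the substantive work has already been carried out in Lemma \ref{L:5.1} and Theorem \ref{T:4.1}. The only thing worth flagging is that the statement holds under the standing assumption of Section 4, namely that $N^m$ carries the concurrent vector field $v$; without this assumption the formula $\tilde\nabla_X v=X$ is unavailable and the identity $v^T=\nabla\varphi$ breaks down. So the proof I would write is a two-line application of Lemma \ref{L:5.1}:
\begin{equation*}
v^T=\nabla\varphi,\qquad \varphi=\tfrac12\tilde g(v,v),
\end{equation*}
which, combined with the fact that $(M^n,g,v^T,\lambda)$ is a Ricci soliton, yields that it is a gradient Ricci soliton with potential function $\varphi$.
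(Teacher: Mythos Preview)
Your proposal is correct and matches the paper's own argument: the paper simply states that the proposition follows immediately from equation \e{5.2} of Lemma \ref{L:5.1}, which is exactly the route you take. Your additional remarks (recalling the computation of $X\varphi$ and noting the standing assumption on $N^m$) are consistent with the paper and merely flesh out what the paper leaves implicit.
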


This proposition shows that  the gradient Ricci soliton $(M^n,g,\varphi,\lambda)$ on $M^n$ is trivial if and only if $\tilde g(v,v)$ is constant on $M^n$.

\begin{corollary}\label{C:5.2} A gradient Ricci soliton $(M^n,g,\varphi,\lambda)$ on a submanifold $M^n$ of $N^m$ is trivial if and only if the concurrent vector field $v$ on $N^m$ is  normal to $M^n$.
\end{corollary}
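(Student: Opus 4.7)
The plan is to chain together two facts already established: first, Proposition \ref{P:5.3} tells us that the Ricci soliton $(M^n,g,v^T,\lambda)$ is realized as the gradient Ricci soliton with potential function $\varphi = \tfrac12\tilde g(v,v)$; second, by definition a gradient Ricci soliton $(M^n,g,\varphi,\lambda)$ is trivial precisely when $\varphi$ is constant on $M^n$.

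Next I would invoke equation \e{5.2} of Lemma \ref{L:5.1}, which identifies $v^T = \nabla\varphi$. This makes the equivalence transparent: $\varphi$ is constant on $M^n$ if and only if $\nabla\varphi \equiv 0$, which happens if and only if $v^T \equiv 0$, i.e., if and only if $v = v^\perp$ is everywhere normal to $M^n$. Assembling the two equivalences in order gives the corollary.

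No obstacle is expected; the statement is essentially a direct reading of Lemma \ref{L:5.1} combined with Proposition \ref{P:5.3}, so the write-up should be only a few lines. The only point to be careful about is clarifying that "trivial" here refers to the gradient-soliton notion (constant potential function), which is consistent with the triviality of the underlying Ricci soliton since $\nabla\varphi = v^T$ is the potential field.
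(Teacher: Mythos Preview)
Your proposal is correct and follows essentially the same route as the paper: both arguments reduce triviality of the gradient soliton to constancy of $\varphi=\tfrac12\tilde g(v,v)$, and then use $\nabla\varphi=v^T$ (Lemma~\ref{L:5.1}) to conclude that $\varphi$ is constant if and only if $v$ is normal to $M^n$. The only cosmetic difference is that the paper re-derives $X\varphi=g(X,v^T)$ inline from \eqref{1.2} rather than citing Lemma~\ref{L:5.1}.
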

\begin{proof} Let $M^n$ be a submanifold of $N^m$. Suppose that $(M^n,g,\varphi,\lambda)$  is a trivial gradient Ricci soliton. Then $\tilde g(v,v)$ is constant on $M^n$. Thus by taking the derivative of $\tilde g(v,v)$ with respect to a tangent vector $X$, we find $0=X\tilde g(v,v)=2g(X,v)$ according to \e{1.2}. Because this is true for any arbitrary tangent vector of $M^n$, the concurrent vector field $v$ must be normal to $M^n$. 

Conversely, if $v$ is normal to $M^n$, then we have $X\tilde g(v,v)=2g(X,v)=0$. Thus $\tilde g(v,v)$ is constant on $M^n$. Consequently,  the gradient Ricci soliton is a trivial one according to Corollary \ref{C:5.2}.
\end{proof}

The last result of this section is the following.

\begin{proposition}\label{P:5.3} If $(M^n,g,\x^T,\lambda)$ is a Ricci soliton on a hypersurface of $M^n$ of $\mathbb E^{n+1}$, then $M^n$ has at most two distinct principal curvatures given by
\begin{align}\label{5.5}\k_1,\k_2=\frac{n\alpha+\rho \pm \sqrt{(n\alpha+\rho)^2+4-4\lambda}}{2},\end{align}
where $\alpha$ is the mean curvature and $\rho$ is the support function, i.e., $H=\alpha N$ and $\rho=\<N,\x\>$ with $N$ being a unit normal vector field.\end{proposition}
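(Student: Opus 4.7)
The plan is to extract a quadratic equation satisfied by every principal curvature of $M^n$, with coefficients that depend only on global quantities $\alpha$, $\rho$, $\lambda$ (not on the particular curvature). Since a quadratic has at most two roots, this forces at most two distinct principal curvatures, and the quadratic formula will produce the stated expression.

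First, I would set up notation for the hypersurface case. Let $N$ be a unit normal vector field on $M^n$ in $\mathbb E^{n+1}$, and write the shape operator as $A=A_N$, so that $h(X,Y)=g(AX,Y)\,N$. Since $\x = \x^T + \x^\perp$ and $\x^\perp$ is normal, we have $\x^\perp = \<N,\x\>N = \rho N$. The trace condition gives $\text{trace}(A)=n\alpha$. Substituting into the formula from Theorem~\ref{T:4.1} yields
\begin{equation*}
Ric(X,Y) = (\lambda-1)g(X,Y) - \rho\, g(AX,Y).
\end{equation*}

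Next, I would compute $Ric$ intrinsically using the Gauss equation. Because $\mathbb E^{n+1}$ is flat, Gauss gives $g(R(X,Y)Z,W) = g(AX,W)g(AY,Z)-g(AX,Z)g(AY,W)$. Choosing a local orthonormal frame $\{e_1,\dots,e_n\}$ that diagonalizes $A$ with $Ae_i = \k_i e_i$, the sectional curvatures become $K(e_i,e_j) = \k_i\k_j$, whence
\begin{equation*}
Ric(e_i,e_i) = \k_i\!\!\sum_{j\ne i}\!\!\k_j = \k_i\bigl(n\alpha - \k_i\bigr).
\end{equation*}

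Equating the two expressions for $Ric(e_i,e_i)$ gives, for each $i$, the scalar identity
\begin{equation*}
\k_i^2 - (n\alpha+\rho)\,\k_i + (\lambda-1)=0.
\end{equation*}
Since this is a single quadratic whose coefficients $n\alpha+\rho$ and $\lambda-1$ do not depend on $i$, every principal curvature lies in the solution set, which has cardinality at most two. Solving by the quadratic formula produces exactly the formula \e{5.5}. Finally, I would note that the eigenbasis of $A$ indeed diagonalizes $Ric$ by the hypersurface formula above, so no eigenvector of $A$ is lost in the argument.

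The only delicate point — and it is minor — is observing that an eigenbasis for $A$ suffices to control the Ricci tensor on all of $TM^n$; this is immediate from the expression $Ric(X,Y)=(\lambda-1)g(X,Y)-\rho g(AX,Y)$, which is manifestly diagonalized by any basis diagonalizing $A$. Beyond this, the argument is pure substitution and quadratic-formula bookkeeping, so I anticipate no real obstacle.
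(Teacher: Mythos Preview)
Your proof is correct and follows essentially the same approach as the paper: both combine Theorem~\ref{T:4.1} with the Gauss equation in an eigenframe of $A_N$ to obtain the quadratic $\k_i^2-(n\alpha+\rho)\k_i+(\lambda-1)=0$ satisfied by every principal curvature, and then read off \eqref{5.5}. Your presentation is slightly more explicit in identifying $\x^\perp=\rho N$ at the outset, but the argument is the same.
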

\begin{proof}
Assume that $(M^n,g,\x^T,\lambda)$ is a Ricci soliton on a hypersurface of $M^n$ of $\mathbb E^{n+1}$, where $\x^T$ denotes the tangential component of the position vector field $\x$.  Let $\{e_1,\ldots,e_n\}$ be an orthonormal frame on $M^n$ such that $e_1,\ldots,e_n$ are eigenvectors of the shape operator $A_N$. Then we have
\begin{align}\label{5.6} A_Ne_i=\kappa_i e_i,\;\;\; i=1,\ldots,n.\end{align}
From equation \e{2.5} of Gauss we obtain
\begin{align}\label{5.7} Ric(X,Y)=n g_0(h(X,Y),H)-\sum_{i=1}^n g_0(h(X,e_i),h(Y,e_i)),\end{align}
where $g_0$ denotes the Euclidean metric of $\mathbb E^{n+1}$. 
It follows from \e{5.6}, \e{5.7} and Theorem \ref{T:4.1} that $(M^n,g,\x^T,\lambda)$ is a Ricci soliton if and only if we have
\begin{align}\label{5.8} (n\alpha -\k_j)\k_i\delta_{ij}=(\lambda-1)\delta_{ij}-\rho \k_i\delta_{ij},\end{align}
where $\delta_{ij}$ is the Kronecker delta. Equation \e{5.8} is equivalent to
\begin{align}\label{6.4} \k_i^2-(n\alpha +\rho)\k_i +\lambda-1=0,\;\;\; i=1,\ldots,0,\end{align}
which implies the proposition\end{proof}

\section{Shrinking Ricci solitons on Euclidean hypersurfaces}

The purpose of this section is to prove the following classification theorem.

\begin{theorem} \label{T:6.1} Let $(M^n,g,\x^T,\lambda)$ be a shrinking Ricci soliton on a hypersurface of $M^n$ of $\mathbb E^{n+1}$ with $\lambda=1$. Then $M^n$ is an open portion of one of the following hypersurfaces of $\mathbb E^{n+1}$:
\begin{enumerate}
\item[{\rm (1)}] A totally umbilical hypersurface;
\item[{\rm (2)}] A flat hypersurface generated by lines through the origin $o$ of $\mathbb E^{n+1}$; 
\item[{\rm (3)}] A spherical hypercylinder $S^k(\sqrt{k-1})\times \mathbb E^{n-k}$, $2\leq k\leq n-1$.\end{enumerate}\end{theorem}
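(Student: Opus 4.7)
The plan is to build on Proposition~\ref{P:5.3}: with $\lambda=1$ the principal curvatures of $M^n$ lie in $\{0,\mu\}$ where $\mu:=n\alpha+\rho$. Let $k$ denote the multiplicity of $\mu$ and let $\mathcal D_\mu,\mathcal D_0$ be the corresponding eigendistributions of $A_N$, of ranks $k$ and $n-k$. Taking traces gives $n\alpha=k\mu$, whence $\rho=-(k-1)\mu$. The degenerate multiplicities $k=n$ (totally umbilical) and $k=0$ (totally geodesic in a hyperplane, hence totally umbilical) both fall into case~(1).

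For $k=1$ we have $\rho=0$, so $\x^\perp=0$ and $\x$ is tangent to $M^n$ everywhere. The integral curve of the vector field $\x$ through $p$ satisfies $\dot\gamma=\gamma$ and traces out the ray $t\mapsto e^tp$ through the origin $o$, so $M^n$ is generated by such rays. Since at most one principal curvature is nonzero at each point, Gauss's equation yields $K(e_i,e_j)=\kappa_i\kappa_j=0$ for every pair, so $M^n$ is flat. This is case~(2).

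The substantive case is $2\le k\le n-1$. The strategy is to feed the decomposition $TM=\mathcal D_\mu\oplus\mathcal D_0$ into the Codazzi equation $(\nabla_X A_N)Y=(\nabla_Y A_N)X$. Testing on pairs in $\mathcal D_\mu$ (where $k\ge 2$ is used to decouple the $\mathcal D_\mu$-component) gives $X\mu=0$ on $\mathcal D_\mu$ and integrability of $\mathcal D_\mu$; on pairs in $\mathcal D_0$ it gives integrability of $\mathcal D_0$; on mixed pairs it yields $(\nabla_Y X)_0=0$ and $(\nabla_X Y)_\mu=-(Y\mu)X/\mu$. A short computation gives $X\rho=-g(A_N X,\x^T)$, which vanishes on $\mathcal D_0$; combined with $\rho=-(k-1)\mu$ this forces $Y\mu=0$ on $\mathcal D_0$ too, so $\mu$ and $\rho$ are \emph{constant} on $M^n$. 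With $\mu$ constant the mixed relations collapse to $\nabla_X Y\in\mathcal D_0$ and $\nabla_Y X\in\mathcal D_\mu$, so both distributions are parallel; the local de~Rham theorem then identifies $M^n\cong L_\mu\times L_0$ as a Riemannian product with both factors totally geodesic in $M^n$. Computing the induced second fundamental forms in $\mathbb E^{n+1}$: since $A_N$ vanishes on $\mathcal D_0$, each $L_0$ is totally geodesic in $\mathbb E^{n+1}$ and hence an open piece of an affine $\mathbb E^{n-k}$; since $A_N=\mu I$ on $\mathcal D_\mu$, each $L_\mu$ is totally umbilical in $\mathbb E^{n+1}$ with mean curvature vector $\mu N$, hence an open piece of a $k$-sphere of radius $1/|\mu|$.

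The final step is to place this product rigidly inside $\mathbb E^{n+1}$. Since $\tilde\nabla_X Y=\nabla_X Y\in\mathcal D_0$ for all $X\in TM$ and $Y\in\mathcal D_0$ (using that $h$ vanishes whenever one slot lies in $\mathcal D_0$), the distribution $\mathcal D_0$ is a single fixed $(n-k)$-plane $V\subset\mathbb R^{n+1}$. Decomposing $p=p_V+p_{V^\perp}$ and using $\x^T\in\mathcal D_0=V$ together with $\x^\perp=\rho N$, we get $p_V=\x^T(p)$ and $p_{V^\perp}=\rho N(p)$, so $|p_{V^\perp}|=|\rho|$ is constant. Hence $M^n\subset V\times S^k(|\rho|)$, a spherical hypercylinder whose axis $V$ passes through $o$. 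Matching the ambient cross-sectional radius $|\rho|=(k-1)|\mu|$ with the intrinsic sphere radius $1/|\mu|$ forces $(k-1)\mu^2=1$, so the radius is exactly $\sqrt{k-1}$; this delivers case~(3). The main obstacle is this final rigid placement: the Codazzi/de~Rham steps only produce the intrinsic and extrinsic second-order structure of $M^n$, and it is the Ricci soliton identity $\rho=-(k-1)\mu$—through the resulting constancy of $\rho$—that pins the axis through the origin and forces the sharp radius $\sqrt{k-1}$.
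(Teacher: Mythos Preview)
Your proposal is correct and follows the paper's overall strategy: invoke Proposition~\ref{P:5.3} so that with $\lambda=1$ the principal curvatures lie in $\{0,\mu\}$, dispose of the extreme multiplicities, and for $2\le k\le n-1$ run a Codazzi analysis on the two eigendistributions to obtain a local Riemannian product via de~Rham.

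The route diverges at the final placement step. The paper, after showing both eigendistributions are totally geodesic, appeals to Moore's lemma to conclude that the immersion itself splits as $S^k\times\mathbb E^{n-k}\subset\mathbb E^{k+1}\times\mathbb E^{n-k}$; the radius $\sqrt{k-1}$ and the fact that the axis passes through the origin are asserted rather than derived. Your argument instead first proves that $\mu$ (hence $\rho$) is \emph{constant}: $X\mu=0$ on $\mathcal D_\mu$ from Codazzi with $k\ge 2$, and $Y\mu=0$ on $\mathcal D_0$ by differentiating $\rho=-(k-1)\mu$ together with $Y\rho=-g(A_NY,\x^T)=0$. Constancy of $\rho$ then gives $A_N\x^T=0$, i.e.\ $\x^T\in\mathcal D_0$, and you carry out the rigid placement by hand: $\mathcal D_0$ is $\tilde\nabla$-parallel along $M$, hence a fixed linear subspace $V\subset\mathbb R^{n+1}$, and the decomposition $p=\x^T(p)+\rho N(p)\in V\oplus V^\perp$ reads off $|p_{V^\perp}|=|\rho|$. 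This avoids Moore's lemma and delivers directly both the axis through $o$ and the sharp radius $\sqrt{k-1}$, which the paper's proof leaves implicit. The trade-off is that the paper's use of Moore's lemma is shorter and does not require first proving that $\mu$ is constant (the constancy follows a~posteriori from the classification of totally umbilical hypersurfaces of $\mathbb E^{k+1}$).
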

\begin{proof} Assume that $(M^n,g,\x^T,\lambda)$ is a shrinking Ricci soliton on a hypersurface of $M^n$ of $\mathbb E^{n+1}$. Then it follows from Proposition \ref{P:5.3} that $M^n$ has at most two distinct principal curvatures given by
\begin{align} \label{6.1} \frac{n\alpha+\rho + \sqrt{(n\alpha+\rho)^2+4-4\lambda}}{2},\;\; \frac{n\alpha+\rho - \sqrt{(n\alpha+\rho)^2+4-4\lambda}}{2}.\end{align}
If $M^n$ has only one principal curvature, then $M^n$ is  totally umbilical. 

Now, let us assume that $M^n$ has two distinct principal curvatures and $\lambda=1$. Then \e{6.1} implies that the two distinct principal curvatures are given respectively by 0 and $n\alpha +\rho$. Let $\kappa$ denote the nonzero principal curvature, i.e., $\kappa=n\alpha+\rho$.
Let us assume that the multiplicities of  $\k$ and 0 are $k$ and $n-k$, respectively, for some $k$ with $1\leq k<n$. Then we have $n\alpha=k\k$. Hence the mean curvature $\alpha$ and the support function $\rho$ are related by
\begin{align}\label{6.2} n (1-k)\alpha=k\rho.\end{align}

\noindent {\it Case} (a): $k=1$. In this case, \e{6.2} gives $\rho=\tilde g(\x,N)=0$. Thus the concurrent vector field $\x$ is tangent to $M^n$. So, it follows from \e{1.2} that $\tilde \nabla_X\x=X$. Hence integral curves of $\x$ are part of lines through the origin in $\mathbb E^{n+1}$. Therefore we obtain case (2) of the theorem.
 
\noindent {\it Case} (b): $2\leq k\leq n-1$. Without loss of generality, we may assume that 
\begin{align}\label{6.3} A_N=\begin{pmatrix} \k I_k& 0\\ 0& 0_{n-k}\end{pmatrix}\end{align}
with respect to an orthonormal tangent frame $\{e_1,\ldots,e_n\}$ of $M^n$, where $I_k$ is an $k\times k$ identity matrix and $0_{n-k}$ is an $(n-k)\times (n-k)$ zero matrix. 
We put 
\begin{align}\label{6.4} \mathcal D_1={\rm Span}\{e_1,\ldots,e_k\},\;\; \mathcal D_2={\rm Span}\{e_{k+1},\ldots,e_n\}.\end{align}
By taking the derivative of \e{6.2} with respect a tangent vector $X$ of $M^n$, we find 
\begin{align}\label{6.5} X\alpha=-\frac{k}{n (1-k)}g(\x^T,A_NX)=\frac{k}{n (k-1)}g(A_N\x^T,X) .\end{align}
Thus we have
\begin{align}\label{6.6} \nabla\alpha=\frac{k}{n (k-1)}A_N\x^T ,\end{align}
which implies that the gradient $\nabla \alpha$ lies in the distribution $\mathcal D_1$. Therefore, without loss of generality, we may assume that 
\begin{align}\label{6.7} \nabla\alpha=\zeta e_1 \end{align}
for some function $\zeta$. So we have
\begin{align}\label{6.8} e_2\alpha =\cdots=e_n\alpha=e_2\k =\cdots=e_n\k=0. \end{align}

For any vector fields $X,Y\in \mathcal D_1$ and $V,W\in \mathcal D_2$, we have 
\begin{align}\label{6.9} h(X,Y)=\k g(X,Y),\;\; h(X,V)=h(V,W)=0. \end{align}
It follows from \e{2.8}, \e{6.9} and equation $(\bar \nabla_V h)(W,X)=(\bar\nabla_X h)(V,W)$ of Codazzi that $h(\nabla_V W,X)=0.$ Since this is true for any vector field $X$ in $\mathcal D_1$, we conclude from \e{6.3} that $\nabla_V W$ lies in $\mathcal D_2$. Therefore $\mathcal D_2$ is a totally geodesic integrable distribution, i.e., $\mathcal D_2$ is an integrable distribution whose leaves  are totally geodesic submanifolds of $M^n$. Moreover, it follows from $h(V,W)=0$ that each leaf of $\mathcal D_2$ is in fact a totally geodesic submanifold of $\mathbb E^{n+1}$. Consequently, $M^n$ are foliated by $(n-k)$-dimensional  totally geodesic submanifolds of $\mathbb E^{n+1}$.

For $1\leq i\ne j\leq k$ and $t\in \{k+1,\ldots,n\}$, we find from \e{2.8}, \e{6.3}, \e{6.8} and \e{6.9} that \begin{align}\label{6.10} &(\bar \nabla_{e_i} h)(e_j,e_t)=-h(e_j,\nabla_{e_i} e_t),
\;\; (\bar \nabla_{e_t} h)(e_i,e_j)=0. \end{align}
Thus from $(\bar \nabla_{e_i} h)(e_j,e_t)=(\bar\nabla_{e_t} h)(e_t,e_j)$, we obtain $\o^t_i(e_j)=0.$
Therefore $\mathcal D_1$ is also a totally geodesic integrable distribution. Consequently, the de Rham decomposition theorem implies that $M^n$ is locally a Riemannian product, say $M_1^k\times \mathbb E^{n-k}$, of a Riemannian $r$-manifold $M_1^k$ and the Euclidean $(n-k)$-space. Furthermore, due to $h(\mathcal D_1,\mathcal D_2)=\{0\}$ by \e{6.3}, Moore's lemma implies that the immersion is a direct product immersion, i.e.,
$$S^{k}\times \mathbb E^{n-k}\subset \mathbb E^{k+1}\times \mathbb E^{n-k},$$
where $S^{k}\subset \mathbb E^{k+1}$ is the standard imbedding of   a $k$-sphere. Consequently, we obtain case (3) of the theorem.
\end{proof}

\begin{remark} Further classification theorems for Ricci solutions on hypersurfaces will be given in another paper. Also, Ricci solitons with concircular potential fields will be discussed in a separate article.
\end{remark}

\noindent {\bf Acknowledgements.} This work is supported by NPST program of King Saud University Project Number 13-MAT1813-02.

\end{document}